\documentclass[10pt, twocolumns]{IEEEtran}

\usepackage{cite}
\usepackage{amsmath,amssymb,amsfonts,amsthm}
\usepackage{algorithmic}
\usepackage{textcomp}
\usepackage{color}
\usepackage{bm}
\usepackage{graphicx}
\usepackage{enumerate}
\usepackage{multicol}
\usepackage{algorithm}
\usepackage{booktabs,tabularx,makecell,array}
\usepackage{tikz}
\usepackage{circuitikz}
\usetikzlibrary{arrows.meta, positioning}
\usepackage{pgfplots}
\pgfplotsset{compat=1.18}
\usepackage{subcaption}

\newcolumntype{L}{>{\raggedright\arraybackslash}X}

\newtheorem{lemma}{Lemma}

\newtheorem{proposition}{Proposition}
\newtheorem{remark}{Remark}

\newtheorem{theorem}{Theorem}

\newtheorem{problem}{Problem}
\newtheorem{definition}{Definition}
\def\begmat#1{\begin{bmatrix}#1\end{bmatrix}}

\def\rea{\mathbb{R}}

\title{
\huge{Data-Driven Linear Quadratic Control Using Output-Feedback via Non-Minimal Realization}
}

\author{Weijian~Li, %\emph{Member, IEEE},
Bowen Yi, %\emph{Member, IEEE},
Panos J. Antsaklis, %\emph{Fellow, IEEE},
and
Hai Lin %\emph{Senior Member, IEEE}

\thanks{This work was supported in part by the Natural Sciences and Engineering Research Council of Canada (NSERC) under Grant RGPIN-2024-0478, the Fonds de recherche du Québec (FRQ) under Grant 378208, and the Programme PIED of Polytechnique Montr\'eal.}

\thanks{Weijian~Li, Panos J. Antsaklis and Hai Lin are with the Department of Electrical Engineering, College of Engineering, University of Notre Dame, Notre
Dame, IN 46556 USA 	
(e-mails: wli26@nd.edu;
hlin1@nd.edu;
pantsakl@nd.edu).}

\thanks{Bowen Yi is with the Department of Electrical Engineering, Polytechnique Montréal and GERAD, Montréal, QC H3T 1J4, Canada (e-mail: bowen.yi@polymtl.ca).}
}

\begin{document}

\maketitle

%%%%%%
\begin{abstract}
In this paper, we investigate a continuous-time linear quadratic control problem for systems with unknown matrices, where only input-output data are available. We propose an output-feedback learning framework based on a canonical non-minimal realization constructed through Kreisselmeier’s adaptive filter. The filter admits an observer interpretation, which leads to an augmented system that preserves the input-output response of the realization and provides accessible state trajectories.
We show that the optimal gain of this augmented system explicitly recovers the optimal gain associated with the canonical non-minimal realization, and hence achieves the optimal state-feedback solution of the original plant.
Exploiting this relation and the known structure of the augmented input matrix, we develop a data-driven value iteration algorithm within the adaptive dynamic programming framework. The resulting controller is implementable from input-output data, and its performance is validated via simulations.
%The filter provides an accessible auxiliary trajectory and admits an observer interpretation for the non-minimal realization.
\end{abstract}

\begin{IEEEkeywords}
Linear quadratic control, dynamic output-feedback, adaptive dynamic programming, canonical non-minimal realization
\end{IEEEkeywords}

%%%%
%%%%-----------------
\section{Introduction}

In recent years, reinforcement learning (RL) and approximate/adaptive dynamic programming (ADP) have attracted significant attention for solving optimal control problems of unknown dynamical systems  \cite{sutton2018reinforcement, werbos1974beyond}. This is motivated by their applications in a wide range of areas, including autonomous driving \cite{kiran2021deep}, robotics \cite{kober2013reinforcement}, and power systems \cite{markovsky2023data}. 
In particular, there has been renewed interest in a
fundamental instance, namely the linear quadratic regulator (LQR) problem, which involves controlling an unknown linear system to minimize an infinite-horizon quadratic cost. This problem is not only theoretically tractable, but also serves as a benchmark for feedback controller design in engineering systems \cite{recht2019tour, lewis2009reinforcement, jiang2017robust}.

It is well-known that the optimal control law for an LQR problem is characterized by a static linear state-feedback policy \cite{lewis2012optimal}.
When the system model is known, the control gain can be obtained by solving an algebraic Riccati equation (ARE).
However, this approach relies on exact knowledge of the system model, and becomes computationally expensive for large-scale systems.
To address these issues, the policy iteration (PI) method was proposed to iteratively solve the ARE and compute the control gain \cite{kleinman1968iterative}, and it was later generalized to partially and fully unknown systems in \cite{vrabie2009adaptive, jiang2012computational}.
While PI typically enjoys fast convergence, it requires an initial stabilizing gain and solves a Lyapunov equation at each iteration. 
To overcome these limitations, a value iteration (VI) method was subsequently developed in \cite{bian2016value} with convergence guarantees.
In parallel, policy optimization methods inspired by RL, such as  gradient
descent, natural policy gradient, and Newton-type algorithms, have also been studied for the LQR problem \cite{fazel2018global, mohammadi2021convergence}.
Nevertheless, the aforementioned works require access to the system state.

In many applications, only system outputs, rather than states, are available \cite{rizvi2023output}. Since static output-feedback controllers generally fail to solve the LQR problem \cite{fatkhullin2021optimizing}, 
recent efforts have been devoted to the linear quadratic control problem using dynamic output-feedback approaches.
For instance, in a discrete-time setting, the system state was reconstructed from a finite segment of input-output trajectory,  based on which PI and VI algorithms were designed in \cite{lewis2010reinforcement}. 
Then, an output-feedback Q-learning scheme was proposed in \cite{rizvi2018output} without the knowledge of the system model. In \cite{duan2023optimization}, dynamic output feedback controllers were investigated on the optimization landscape.
In the continuous-time context, adaptive suboptimal output-feedback control for linear systems with known input matrix was investigated via integral reinforcement learning in \cite{zhu2014adaptive}.
In \cite{rizvi2019reinforcement, rizvi2023output}, a model-free observer was constructed, and then PI and VI methods were subsequently proposed.
The idea was further employed for optimal output tracking in \cite{chen2022adaptive}. Following that, data-driven output-feedback ADP was explored with relaxed assumptions in \cite{lin2025new}. However, accurate state reconstruction requires the system to run for a sufficiently long period before data collection.

Very recently, the canonical non-minimal realization framework developed in \cite{bosso2025data, bosso2026data} has emerged as a powerful tool for data-driven control of continuous-time linear time-invariant (LTI) systems using only input-output trajectory data. It provides an auxiliary higher-dimensional state-space representation that can be constructed from input-output data without identifying a minimal realization of the plant. This idea was further developed in \cite{gao2025input}, where Kreisselmeier's adaptive filter was employed to generate an accessible non-minimal state trajectory directly from measured input-output signals, enabling data-driven stabilization. 

Motivated by these results, this paper develops a data-driven output-feedback linear quadratic control method for continuous-time LTI systems with unknown matrices. The proposed method avoids explicit state reconstruction by learning the controller in an auxiliary coordinate generated by Kreisselmeier's adaptive filter, thereby removing the need for the system to run for a sufficiently long period before data collection. The main contribution is a framework based on a canonical non-minimal realization, which reformulates the original problem as an augmented LQR problem with accessible state trajectories. An explicit relation between the optimal gain of the augmented problem and that of the original plant further enables a data-driven VI algorithm with a relaxed rank condition and lower per-iteration computational complexity.

The rest of this paper is organized as follows. 
In Section \uppercase\expandafter{\romannumeral2}, we formulate a linear quadratic control problem and present necessary preliminaries on VI algorithms.
In Section \uppercase\expandafter{\romannumeral3}, we present the proposed framework based on a canonical non-minimal realization. Then, we design a data-driven VI algorithm in Section \uppercase\expandafter{\romannumeral4}, and provide simulation results in Section \uppercase\expandafter{\romannumeral5}. Finally, we conclude this paper in Section \uppercase\expandafter{\romannumeral6}.

\textbf{Notation:}
Let 
%$\mathbb R^m$ be the set of $m$-dimensional real column vectors, $\mathbb R^{m \times n}$ be the set of $m$-by-$n$ dimensional real matrices, 
$\mathbb N$ be the set of nonnegative integers, and $\mathbb C_+$ be the set of complex numbers with non-negative real parts.
$\mathbb S^{n}$, $\mathbb S_+^{n}$ and  $\mathbb S_{++}^{n}$ denote the sets of $n$-by-$n$ real symmetric matrices, positive semi-definite matrices, and positive definite matrices, respectively.
We use $0_m$ ($1_m$) to represent the $m$-dimensional vector with all entries of $0$ ($1$).
%, and $I_n$ denote the $n$-by-$n$ identity matrix.
%We simply use $0$ to denote a zero vector or matrix of appropriate dimension when there is no confusion.
% Denote by $(\cdot)^\top$, $\otimes$ and ${\rm rank}(\cdot)$ the transpose, the Kronecker product and the rank of a matrix. 
Let $\Vert \cdot \Vert$ be the Euclidean norm for vectors and the induced $l_2$-norm for matrices, and $\Vert \cdot \Vert_F$ be the Frobenius norm.
For $x \in \mathbb{R}^n$, $\mathrm{diag}(x) \in \mathbb{R}^{n \times n}$ is the diagonal matrix with the entries of $x$.
For $A \in \mathbb R^{m \times n}$, ${\rm vec}(A) = [a_1^\top, a_2^\top, \dots, a_n^\top]^\top$, where $a_i$ is the $i$-th column of $A$.

%%%%
%%%%----------------- 
\section{Formulation and Preliminary}

In this section, we introduce a linear quadratic control problem, and present a model-based VI algorithm.

\subsection{Problem Statement}

Consider a continuous-time LTI system described by
\begin{equation}
\label{dyn:LTI}
\dot x = Ax + Bu, ~ y = Cx,
\end{equation}
where $x \in \mathbb R^n$ is the system state with initial condition $x(0) = x_0$, $u \in \mathbb R^m$ is the control input, $y \in \mathbb R^p$ is the measured output, and $A$, $B$ and $C$ are constant matrices with compatible dimensions.
Suppose that the pair $(A, B)$ is controllable, and the pair $(C, A)$ is observable.

The linear quadratic control aims to find a control policy that minimizes an infinite-horizon quadratic cost, i.e.,
\begin{equation}
\begin{aligned}
\label{form}
\min~\int_0^\infty (y^\top Q y + u^\top R u) \mathrm{d} t,
~~{\rm subject~to}~\eqref{dyn:LTI},
\end{aligned}
\end{equation}
where $Q \in \mathbb S_{+}^p$ and $R \in \mathbb S_{++}^m$ are user-defined weighting matrices such that the pair $(\sqrt{Q}C, A)$ is observable.

If the model \eqref{dyn:LTI} is known and the state $x$ is measurable, then \eqref{form} reduces to a standard LQR problem. It is well-known that the optimal state-feedback controller
is
$u(t) = -K^*x(t)$,  where $K^* = R^{-1} B^\top P^*$, and $P^*$ is the unique positive definite solution to the ARE \cite{lewis2012optimal}
\begin{equation}
\label{ARE}
A^\top P + PA - PBR^{-1} B^\top P + C^\top Q C = 0.
\end{equation}

However, in many practical scenarios, the state $x$ is not available, and the system matrices $(A,B,C)$ are unknown \cite{rizvi2023output}. This motivates the following problem.

\begin{problem}
\label{prob}\rm
Suppose that the input-output signals \(u(t)\) and \(y(t)\) of the plant \eqref{dyn:LTI} can be measured. Without any prior knowledge of $A, B$ and $C$ beyond the
dimension $n$, design a dynamic output-feedback controller in the form of
\begin{equation}
\label{controller}
\dot \varrho = A_c \varrho + B_{c} \begmat{u \\y},~u = -K_c \varrho,
\end{equation}
with the controller state $\varrho \in \mathbb R^{n_\varrho}$, constant matrices $A_c \in \mathbb R^{n_\varrho \times n_\varrho}$ and $B_c \in \mathbb R^{n_\varrho \times (p+m)}$, and $K_c \in \mathbb R^{m \times n_\varrho}$ derived from the measured input-output data, such that
\begin{enumerate}
\item the closed-loop system consisting of  \eqref{dyn:LTI} and \eqref{controller} is stable; 
\item there exists a matrix $T_c \in \rea^{n \times n_\varrho}$ satisfying
\begin{equation}
\begin{aligned}
\lim_{t\to\infty} \| x(t) - T_c \varrho(t) \|  =0, ~{\rm and}~
K_c = K^* T_c,
\end{aligned}
\end{equation}
where $K^*$ is the optimal state-feedback gain of \eqref{form}.
\end{enumerate}
\end{problem}

\begin{remark}\rm
Since the system state is not accessible, a state-feedback controller for \eqref{form} cannot be directly implemented or learned.
To address this issue, we introduce an auxiliary higher-dimensional system with state $\varrho$, driven by the measured input-output signals.
Rather than directly computing the optimal state-feedback gain $K^*$,
the objective is to identify the optimal controller in the \(\varrho\)-coordinate, thereby being implementable. Consequently, the problem can be viewed as an input-output data-driven ADP problem.
\end{remark}

\subsection{Model-based Value Iteration}

We briefly recall a model-based VI algorithm for solving the ARE \eqref{ARE}, which is instrumental in our proposed approach.

\begin{algorithm}[!t]
\caption{Model-Based Value Iteration for LQR \cite{bian2016value}}
\label{alg:VI:modelbased}
\begin{algorithmic}[1]
\STATE \textbf{Input:} An initial matrix $P_0 \in \mathbb S_{++}^n$,  a tolerance  $\delta > 0$, $i = 0$ and $j = 0$
\STATE \textbf{Output:} $P_i$ as an approximation to $P^\ast$
\LOOP
\STATE $\tilde P_{i + 1} \!=\! P_i + \gamma_i\!\left(A^\top\! P_i \!+\! P_i A \!-\! P_i B R^{-1}\! B^\top\! P_i \!+\! C^\top \! Q C \right)$
\IF{$\tilde P_{i+1} \notin \mathbb{B}_j$}
\STATE $P_{i+1} = P_0$,~{\rm and} $j \leftarrow j+1$
\ELSIF{$\| \tilde P_{i+1} - P_i\|/\gamma_i \le \delta$}
\STATE \textbf{return} $P_i$ as an approximation to $P^\ast$
\ELSE
\STATE $P_{i+1} = \tilde P_{i+1}$
\ENDIF
\STATE $i \leftarrow i+1$
\ENDLOOP
\end{algorithmic}
\end{algorithm}

We define a stochastic approximation stepsize sequence $\{\gamma_i\}_{i = 0}^\infty$ satisfying
\begin{equation}
\label{VI:stepsize}
\gamma_i > 0, 
~\sum_{i = 0}^\infty \gamma_i= \infty, 
~\sum_{i = 0}^\infty \gamma_i^2 < \infty.
\end{equation}
To ensure the boundedness of the iterates, we introduce a collection of bounded subsets $\{\mathbb B_j\}_{j = 0}^\infty$ of $\mathbb S_+^n$ with nonempty interiors such that 
\begin{equation*}
\mathbb B_j \subset \mathbb B_{j+1},
~j \in \mathbb N,
~\lim_{j \to \infty} \mathbb B_j = \mathbb S_+^n.
\end{equation*}
Let $\delta$ be a prescribed tolerance.

Following \cite{bian2016value}, we summarize the VI in Algorithm \ref{alg:VI:modelbased}. Detailed convergence analysis and extensions to model-free settings can also be found therein.

%%%%
%%%%-----------------
\section{Proposed Framework}
\label{sec:3}

In this section, we propose an approach to solve Problem \ref{prob}. The main idea is to use a filter to get a canonical non-minimal realization of the plant \eqref{dyn:LTI}, and this has recently been shown effective in addressing offline direct data-driven control problems \cite{bosso2025data, bosso2026data, gao2025input}.

%%%%
%%%%-----------------
\subsection{Canonical Non-minimal Realization}

We first recall the definition of canonical non-minimal realization.

\begin{definition}[\!\!\cite{bosso2025data}]\rm
\label{def:real}
A non-minimal realization 
\begin{equation}
\label{dyn:real}
\dot \eta = (F + LH) \eta + Gu, ~ y = H \eta, ~\eta(0) = \eta_0
\end{equation}	
of system \eqref{dyn:LTI} with $\eta \in \rea^{n_\eta}$ and $n_\eta > n$
is canonical, if $F \in \mathbb R^{n_\eta \times n_{\eta}}$ is Hurwitz, and there exists  a full-row rank matrix $\Pi \in \mathbb R^{n \times n_\eta}$ such that
\begin{equation}
\label{eq:real}
\Pi (F + LH) = A \Pi, 
~\Pi G = B, ~{\rm and}
~H = C\Pi.
\end{equation}
\end{definition}

From \cite{bosso2025data}, the pair $(F+LH, G)$ is stabilizable, and the pair $(H, F+LH)$ is detectable.
Moreover, we have
\begin{equation}
\label{Real:state}
\Pi \eta_0 = x_0 \quad \implies \quad  \Pi \eta(t) \equiv x(t) , \; \forall t \ge 0.
\end{equation}

Suppose that a canonical non-minimal realization of \eqref{dyn:LTI}, given by \eqref{dyn:real}, is available, and the input-state trajectory of \eqref{dyn:real} is accessible. By endowing \eqref{dyn:real} with the same cost as in \eqref{form}, we derive an LQR problem as
\begin{equation}
\begin{aligned}
\label{reform:lqr}
\min~\int_0^\infty (y^\top Q y + u^\top R u) \mathrm{d} t,
~{\rm subject~to}~\eqref{dyn:real}.
\end{aligned}
\end{equation}

The following lemma clarifies the relationship between the optimal state-feedback controllers for \eqref{form} and \eqref{reform:lqr}.

\begin{lemma}\rm
\label{lem:real:opt}
Let $u = -K^* x$ be the optimal controller for \eqref{form}, and \eqref{dyn:real} be a canonical non-minimal realization of \eqref{dyn:LTI}.
Then, \eqref{reform:lqr} admits the optimal control law $u = -K^* \Pi \eta$, which  recovers the optimal controller for \eqref{form}, provided that $\Pi \eta_0 = x_0$. 
\end{lemma}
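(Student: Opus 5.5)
The plan is to construct an explicit solution of the Riccati equation associated with \eqref{reform:lqr} from $P^*$, and then use the standard characterization of the optimal LQR gain. Concretely, I take the candidate $\bar P = \Pi^\top P^* \Pi \in \mathbb S_+^{n_\eta}$ and verify that it solves
\begin{equation*}
(F+LH)^\top \bar P + \bar P (F+LH) - \bar P G R^{-1} G^\top \bar P + H^\top Q H = 0 .
\end{equation*}
Using \eqref{eq:real}, we have $\bar P(F+LH) = \Pi^\top P^* \Pi (F+LH) = \Pi^\top P^* A \Pi$. In the same way, $\bar P G = \Pi^\top P^* B$ and $H^\top Q H = \Pi^\top C^\top Q C \Pi$. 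The left-hand side therefore equals $\Pi^\top\big(A^\top P^* + P^*A - P^*BR^{-1}B^\top P^* + C^\top QC\big)\Pi = 0$ by \eqref{ARE}. The associated gain is $R^{-1}G^\top \bar P = R^{-1}B^\top P^*\Pi = K^*\Pi$.

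Next I need that this ARE solution actually yields the optimal controller, since $\bar P$ is only positive semidefinite and the pair $(\sqrt{Q}H, F+LH)$ need not be observable. I will therefore argue optimality directly by completion of squares. For any input $u$ for which the cost is finite, the output $y = H\eta = C\Pi\eta$ coincides with the plant output driven from $x_0=\Pi\eta_0$, by \eqref{Real:state}. Hence the cost of \eqref{reform:lqr} equals the cost of \eqref{form} with the same $u$ and initial state $\Pi\eta_0$. The two problems thus have the same feasible costs, which gives the optimal value $x_0^\top P^* x_0 = \eta_0^\top \bar P \eta_0$.

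Applying $u=-K^*\Pi\eta$ to \eqref{dyn:real}, the quantity $\Pi\eta$ satisfies $\frac{d}{dt}\Pi\eta = \Pi(F+LH)\eta - \Pi G K^*\Pi\eta = (A-BK^*)\Pi\eta$. So $\Pi\eta(t)$ coincides with the optimal closed-loop plant state $x(t)$, and the inputs and outputs agree. The controller $u=-K^*\Pi\eta$ therefore attains the optimal cost.

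Closed-loop stability of $\eta$ still has to be checked, and this is the step I expect to be the main obstacle. I would write $\eta = $ (component driven through $\Pi$) plus a component in $\ker\Pi$. Since $F$ is Hurwitz and $L H\eta = L C\Pi\eta$ depends only on $\Pi\eta$, the dynamics $\dot\eta = F\eta + (LC - GK^*)\Pi\eta$ form a Hurwitz system driven by the exponentially decaying signal $\Pi\eta(t)=e^{(A-BK^*)t}\Pi\eta_0$. Hence $\eta\to 0$, and the closed loop is stable. This input-to-state view avoids any decomposition argument.

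Finally, the Riccati solution $\bar P$ shows that $u=-K^*\Pi\eta$ minimizes the cost for every $\eta_0$. The condition $\Pi\eta_0=x_0$ is used only to identify the optimal trajectories of \eqref{reform:lqr} with those of \eqref{form}, and hence to conclude that the controller recovers the optimal controller of \eqref{form}.
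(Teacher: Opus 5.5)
Your proof is correct, but it certifies optimality by a different route than the paper. The first step, checking that $\bar P=\Pi^\top P^*\Pi$ solves the Riccati equation of \eqref{reform:lqr} with gain $K^*\Pi$, is the same in both. The paper then proves that $(\sqrt{Q}H,F+LH)$ is detectable, using an eigenvector argument based on observability of $(\sqrt{Q}C,A)$ and detectability of $(H,F+LH)$. Together with stabilizability of $(F+LH,G)$, standard LQR theory then makes $\bar P$ the unique positive semidefinite (hence stabilizing) solution, so $K^*\Pi$ is the optimal gain.

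You bypass Riccati uniqueness entirely. Because $\frac{\mathrm{d}}{\mathrm{d}t}\Pi\eta=A\Pi\eta+Bu$ and $y=C\Pi\eta$ hold for every input, the cost in \eqref{reform:lqr} from $\eta_0$ is the same functional of $u$ as the cost in \eqref{form} from $\Pi\eta_0$. Optimal values and optimal inputs therefore transfer directly, and $u=-K^*\Pi\eta$ regenerates the optimal plant input. Stability then follows from the cascade $\dot\eta=F\eta+(LC-GK^*)\Pi\eta$ with $F$ Hurwitz. Your route is more elementary, gives optimality for every $\eta_0$, and makes closed-loop stability explicit rather than inheriting it from the stabilizing-solution property.

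In your version the Riccati computation is actually dispensable, since $\eta_0^\top\bar P\eta_0=(\Pi\eta_0)^\top P^*(\Pi\eta_0)$ holds trivially. Likewise, what you call completion of squares is really a reduction to the known solution of \eqref{form}.

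What the paper's route buys in return is structural information: detectability of $(\sqrt{Q}H,F+LH)$ and uniqueness of the positive semidefinite solution of the realization ARE. This is the form in which the lemma is used in Proposition~\ref{prop:opt:contrls}. There, the block $P_{zz}^*$ is only known to be a positive semidefinite solution of that ARE, and it must be identified with $\Pi^\top P^*\Pi$ to conclude $K_z^*=K^*(\theta^\top\otimes T^{-1})$.
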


\begin{proof}
We begin by establishing the solvability of \eqref{reform:lqr}.
Note that $(F + LH, G)$ is stabilizable and $(H, F + LH)$ is detectable. Then, \eqref{reform:lqr} is solvable if $(\sqrt{Q}H, F + LH)$ is detectable. We show this by contradiction. If $(\sqrt{Q}H, F + LH)$ is not detectable, there exists $\lambda \in \mathbb C_+$ and $v \not= 0$ such that
\begin{equation*}
(F + LH)v = \lambda v, ~{\rm and}~\sqrt{Q}H v = 0.
\end{equation*}

By \eqref{eq:real}, $A \Pi v = \lambda \Pi v$ and $\sqrt{Q} C \Pi v = 0$. 
%In other words, $\lambda$ is an eigenvalue of $A$ associated with the eigenvector of $\Pi v$, or $\Pi v = 0$ otherwise.
Since $(\sqrt{Q}C, A)$ is observable, 
$\Pi v = 0$ and $C \Pi v = Hv = 0$.
Note that $(H, F + LH)$ is detectable. Hence, $(F + LH)v = \lambda v$ and $Hv = 0$ imply $v = 0$,  which contradicts with $v \not= 0$.
Therefore, $(\sqrt{Q}H, F + LH)$ is detectable, and \eqref{reform:lqr} is solvable.

Let $P^* \in \mathbb S_+^n$ be the solution to \eqref{ARE}.
It is clear that
\begin{equation*}
\begin{aligned}
\Pi^\top A^\top P \Pi &+ \Pi^\top  PA \Pi \\
&- \Pi^\top  PBR^{-1} B^\top P \Pi + \Pi^\top C^\top Q C \Pi = 0.
\end{aligned}
\end{equation*}
Recalling \eqref{eq:real} yields
\begin{equation*}
\begin{aligned}
(F &+ LH)^\top \Pi^\top P \Pi + \Pi^\top  P \Pi (F + LH) \\
&- \Pi^\top  P\Pi G R^{-1} G^\top \Pi^\top P \Pi +  H^\top Q  H = 0.
\end{aligned}
\end{equation*}

Thus, $P_\eta^* = \Pi^\top P^* \Pi \in \mathbb S_+^{n_{\eta}}$ is the unique solution to 
\begin{equation*}
\begin{aligned}
(F\!+\! LH)^\top P_\eta \!+\! P_\eta (F \!+\! LH) 
\!-\! P_\eta G R^{-1} G^\top P_\eta \!+\! H^\top Q H \!=\! 0.
\end{aligned}
\end{equation*}
Furthermore, the optimal controller for \eqref{reform:lqr} is given by
$$u = -  R^{-1} G^\top P_\eta^* \eta =  - R^{-1} B^\top P^* \Pi \eta = - K^* \Pi \eta.$$

By \eqref{Real:state}, $x(t) \equiv \Pi \eta(t)$ whenever $\Pi \eta_0 = x_0$. Since $u = -K^*x$ is the optimal controller of \eqref{form}, it follows directly that $u = -K^* \Pi \eta$ also solves \eqref{form}.
\end{proof}

Lemma \ref{lem:real:opt} indicates that, once a canonical non-minimal realization of \eqref{dyn:LTI} is available,
Problem \ref{prob} can be reformulated as an LQR problem associated with this realization.
Inspired by the observation, the framework for solving Problem \ref{prob} is illustrated in Fig. \ref{fig:frame}.
Specifically, the plant \eqref{dyn:LTI} generates input-output data, from which a canonical non-minimal realization \eqref{dyn:real} is constructed. The realization produces the non-minimal state trajectory, based on which a model-free algorithm will be studied to solve \eqref{reform:lqr} in the sequel.

\begin{figure}[!ht]
\centering
\resizebox{.43\textwidth}{!}{%
\begin{circuitikz}
\tikzstyle{every node}=[font=\fontsize{18.2pt}{23.7pt}\selectfont]

\draw  (7.375,15.375) rectangle (10.375,14.125);
\draw  (7.375,13) rectangle (10.375,11.75);
\draw  (2.125,15.375) rectangle (5.125,14.125);

\draw [-{Stealth[scale=1.5]}, ] (5.125,14.75) -- (7.375,14.75)
node[pos=0.38, above=2pt, font=\fontsize{14pt}{14pt}] {$u$};

\draw [-{Stealth[scale=1.5]}, ] (11.5,12.375) -- (10.375,12.375);
\draw [-{Stealth[scale=1.5]}, ] (6,12.625) -- (7.375,12.625);
\draw [-{Stealth[scale=1.5]}, ] (3.5,12.125) -- (3.5,14.125);

\draw [short] (6,14.75) -- (6,12.625);
\draw [short] (3.5,12.125) -- (7.375,12.125)
node[midway, below, font=\fontsize{14pt}{14pt}] {$\eta$};

\draw [short] (11.5,12.375) -- (11.5,14.75);
\draw [short] (11.5,14.75) -- (10.375,14.75)
node[midway, above, font=\fontsize{14pt}{14pt}] {$y$};

\node [font=\fontsize{12pt}{23.7pt}\selectfont, fill={rgb,255:red,255; green,255; blue,255}, fill opacity=1, text opacity=1, inner xsep=0.080cm, inner ysep=0.085cm, rounded corners=0.020cm] at (3.625,14.75) {$u=-K^* \Pi \eta$};

\node [font=\fontfamily{cmr}\fontsize{12pt}{23.7pt}\selectfont, fill={rgb,255:red,255; green,255; blue,255}, fill opacity=1, text opacity=1, inner xsep=0.080cm, inner ysep=0.085cm, rounded corners=0.020cm] at (8.95,14.75) {Plant \eqref{dyn:LTI}};

\node [font=\fontfamily{cmr}\fontsize{12pt}{23.7pt}\selectfont, fill={rgb,255:red,255; green,255; blue,255}, fill opacity=1, text opacity=1, inner xsep=0.080cm, inner ysep=0.085cm, rounded corners=0.020cm] at (8.875,12.375) {Realization \eqref{dyn:real}};

\node [font=\fontfamily{cmr}\fontsize{12pt}{23.7pt}\selectfont, rotate around={-360:(0,0)}, fill={rgb,255:red,255; green,255; blue,255}, fill opacity=1, text opacity=1, inner xsep=0.080cm, inner ysep=0.085cm, rounded corners=0.020cm] at (12.5,13.5) {filtering};

\end{circuitikz}
}%
\caption{The proposed framework for solving Problem \ref{prob}.}
\label{fig:frame}
\end{figure}

%%%%
%%%%-----------------
\subsection{Kreisselmeier's Adaptive Filter}

Inspired by \cite{kreisselmeier2003generation, gao2025input}, we construct a canonical non-minimal realization of \eqref{dyn:LTI} using Kreisselmeier's adaptive filter, which is given by the following dynamical
system
\begin{equation}
\label{dyn:filter:Z}
\dot Z = MZ + \begmat{y^\top \otimes I_n \quad u^\top \otimes I_n}, ~Z(0) = Z_0
\end{equation}
where $Z \in \mathbb R^{n \times \tilde n_z}$ is the filter state, $\tilde n_z = (p + m)n$, and $M \in \mathbb R^{n \times n}$ is Hurwitz.

% \blue{We have the following key relation.}

\begin{lemma}[\!\!\cite{gao2025input}]
\label{lem:filter}\rm
Consider the system \eqref{dyn:LTI} and the filter \eqref{dyn:filter:Z}.
Suppose that $M$ is a Hurwitz matrix with distinct eigenvalues. Then, there exist a non-singular $T \in \rea^{n \times n}$ and a column vector $\theta \in \mathbb R^{\tilde n_z}$
such that
\begin{equation}
\label{state:sys:filter}
Tx(t) = Z(t) \theta + e^{Mt}(Tx_0 - Z_0 \theta).
\end{equation}
In addition, by defining $z := {\rm vec}(Z) \in \rea^{n_z}$ with $n_z := n^2 (p+m)$, the filter \eqref{dyn:filter:Z} can be written as 
\begin{equation}
\label{dyn:filter}
\dot z = A_z z + B_\xi u + L_z y,
\end{equation}
where $A_z = I_{\tilde n_z} \otimes M$,
\begin{equation*}
B_\xi = \begin{bmatrix}
0_{pn^2 \times m} \\
I_m \otimes {\rm vec}(I_n)
\end{bmatrix}, ~
L_z = \begin{bmatrix}
I_p \otimes {\rm vec}(I_n) \\
0_{mn^2 \times p} 
\end{bmatrix}.
\end{equation*}
\end{lemma}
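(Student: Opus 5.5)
The argument has two parts. First I transform the plant into a canonical form whose dynamics depend on $x$ only through $y$. Then I check that the filter reproduces that form exactly. The vectorized form \eqref{dyn:filter} is pure bookkeeping with the Kronecker identity $\mathrm{vec}(XYW)=(W^\top\otimes X)\mathrm{vec}(Y)$.

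\textbf{Step 1 (change of coordinates).} Since $(C,A)$ is observable and $M$ is Hurwitz with distinct eigenvalues, I seek a nonsingular $T$ and a matrix $\Lambda\in\rea^{n\times p}$ such that
\begin{equation*}
TAT^{-1} = M + \Lambda C T^{-1}.
\end{equation*}
Equivalently, $TA - MT = \Lambda C$. The choice of $T$ proceeds as follows.
\begin{itemize}
\item By observability, output injection can assign the spectrum, so there is $\Lambda_0$ such that $A - \Lambda_0 C$ has the same (distinct) eigenvalues as $M$.
\item Then $A-\Lambda_0C$ and $M$ are both diagonalizable with the same spectrum, hence similar: $T(A-\Lambda_0C)T^{-1}=M$ for some nonsingular $T$.
\item Setting $\Lambda = T\Lambda_0$ gives the desired identity.
\end{itemize}
In the new coordinates $\xi = Tx$, the dynamics read
\begin{equation*}
\dot\xi = M\xi + \Lambda y + TBu.
\end{equation*}

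\textbf{Step 2 (choice of $\theta$).} I write
\begin{equation*}
\Lambda y + TBu = \begmat{y^\top\otimes I_n & u^\top\otimes I_n}\theta,
\end{equation*}
with
\begin{equation*}
\theta = \begmat{\mathrm{vec}(\Lambda)\\ \mathrm{vec}(TB)}.
\end{equation*}
This holds because $\Lambda y = \mathrm{vec}(I_n\Lambda y) = (y^\top\otimes I_n)\mathrm{vec}(\Lambda)$, and likewise for $TBu$.

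\textbf{Step 3 (error dynamics).} Since $\theta$ is constant, \eqref{dyn:filter:Z} gives
\begin{equation*}
\tfrac{d}{dt}(Z\theta) = MZ\theta + \Lambda y + TBu.
\end{equation*}
The error $e := Tx - Z\theta$ therefore satisfies $\dot e = Me$. Its solution is $e(t) = e^{Mt}(Tx_0 - Z_0\theta)$, which is exactly \eqref{state:sys:filter}.

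\textbf{Step 4 (vectorized form).} Vectorizing \eqref{dyn:filter:Z} column-wise gives $\mathrm{vec}(MZ) = (I_{\tilde n_z}\otimes M)z$. The $k$-th column block of $y^\top\otimes I_n$ is $y_k I_n$, so its vectorization is
\begin{equation*}
\mathrm{vec}(y^\top\otimes I_n) = (I_p\otimes\mathrm{vec}(I_n))\,y,
\end{equation*}
and similarly for $u$. Stacking the $y$-blocks first and the $u$-blocks second gives $L_z$ and $B_\xi$ as stated.

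The main obstacle is Step 1, namely guaranteeing that $T$ exists for a prescribed $M$. Pole assignment by output injection is standard for observable $(C,A)$. The delicate point is that similarity of $A-\Lambda_0C$ and $M$ requires the assigned spectrum to be self-conjugate, which holds since $M$ is real, and requires both matrices to be diagonalizable. This is exactly where the distinct-eigenvalue assumption is used: distinct eigenvalues make both matrices diagonalizable, so equal spectra imply similarity.
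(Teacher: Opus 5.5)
Your proof is correct and is the standard Kreisselmeier construction. The paper states this lemma with a citation to \cite{gao2025input} and gives no proof of its own, but your construction matches how the paper uses the lemma: later it sets $\epsilon = Tx - Z\theta$ with $\dot\epsilon = M\epsilon$, and in the numerical example the last three entries of $\theta$ equal $TB$ while the first three equal the $\Lambda$ satisfying $TA - MT = \Lambda C$, which is exactly your choice $\theta = [\mathrm{vec}(\Lambda)^\top, \mathrm{vec}(TB)^\top]^\top$.

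One detail needs tightening. When $M$ has complex eigenvalues, diagonalization only gives a complex similarity, so you should state that $T$ can be taken real. This holds because $A - \Lambda_0 C$ and $M$ are both nonderogatory with the same real characteristic polynomial, hence both are similar over $\mathbb{R}$ to its companion matrix.
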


With the help of \eqref{dyn:filter}, we are ready to introduce a canonical non-minimal realization of \eqref{dyn:LTI} in the sense of Definition \ref{def:real}.

\begin{lemma}[\!\!\cite{gao2025input}]
\label{lem:cano:real}\rm
The system described by
\begin{equation}
\label{dyn:real:filter}
\dot \xi = A_\xi  \xi + B_\xi u, ~ y = C_\xi  \xi, ~\xi(0) = \xi_0,
\end{equation}
is a canonical non-minimal realization of the plant \eqref{dyn:LTI},
where $\xi \in \mathbb R^{n_\xi}$, $n_\xi = n^2(p +m)$, $(\theta^\top \otimes T^{-1}) \xi_0 = x_0$, and
\begin{equation}
\label{eq:xi:z}
A_\xi := A_z + L_z C_\xi, ~
C_\xi := C(\theta^\top \otimes T^{-1}).
\end{equation}
\end{lemma}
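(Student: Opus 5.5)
We verify Definition \ref{def:real} directly for $F := A_z$, $L := L_z$, $H := C_\xi$, $G := B_\xi$, with the candidate
\begin{equation*}
\Pi := \theta^\top \otimes T^{-1} \in \rea^{n \times n_\xi}.
\end{equation*}
The requirements are:
\begin{itemize}
\item $A_z$ is Hurwitz;
\item $\Pi$ has full row rank;
\item the three intertwining identities in \eqref{eq:real} hold;
\item $n_\xi > n$.
\end{itemize}
Some of these are immediate. Since $A_z = I_{\tilde n_z} \otimes M$ and $M$ is Hurwitz, $A_z$ is Hurwitz. The identity $H = C\Pi$ holds by the definition of $C_\xi$ in \eqref{eq:xi:z}. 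We also have $n_\xi = n^2(p+m) > n$ whenever $n(p+m) > 1$, which is automatic since $p, m \ge 1$. Full row rank of $\Pi$ requires $\theta \neq 0$. This holds because \eqref{state:sys:filter} with $Z_0 = 0$ and $x_0$ arbitrary, together with $T$ nonsingular, forces $\theta \neq 0$, for otherwise $Tx(t) = e^{Mt}Tx_0$ for all inputs. Then $\theta^\top \otimes T^{-1}$ has rank $1 \cdot n = n$.

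The substantive identities are $\Pi B_\xi = B$ and $\Pi(A_z + L_z C\Pi) = A\Pi$. We translate back to matrix form using $\Pi\,{\rm vec}(Z) = T^{-1} Z \theta$, which follows from $(\theta^\top \otimes T^{-1}){\rm vec}(Z) = {\rm vec}(T^{-1} Z \theta)$.

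The key step is to extract the structure of $(T, \theta)$ from Lemma \ref{lem:filter}. Differentiate \eqref{state:sys:filter}. The exponential term satisfies $\frac{d}{dt} e^{Mt} w = M e^{Mt} w$, and $e^{Mt} w = Tx - Z\theta$, so
\begin{equation*}
T(Ax + Bu) = MZ\theta + [\,y^\top \otimes I_n \;\; u^\top \otimes I_n\,]\theta + M(Tx - Z\theta).
\end{equation*}
Split $\theta = [\theta_y^\top\; \theta_u^\top]^\top$. Then $(y^\top \otimes I_n)\theta_y = \Theta_y y$ and $(u^\top \otimes I_n)\theta_u = \Theta_u u$, where $\Theta_y \in \rea^{n\times p}$ and $\Theta_u \in \rea^{n\times m}$ are $\theta_y$ and $\theta_u$ reshaped. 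The identity therefore reads
\begin{equation*}
TAx + TBu = MTx + \Theta_y C x + \Theta_u u.
\end{equation*}
Since it holds for arbitrary $x_0$ and $u$, we obtain
\begin{equation*}
TA = MT + \Theta_y C, \qquad TB = \Theta_u.
\end{equation*}
This is the standard observer-form construction behind Kreisselmeier's filter, i.e.\ how $T$ and $\theta$ arise in \cite{gao2025input}. Making this rigorous, including the independence argument in $x_0$ and $u$ and the uniqueness of $\theta$, is the main obstacle. If needed, I would simply take $T$ and $\theta$ as constructed there, with $M = T A T^{-1} - \Theta_y C T^{-1}$, as the defining property.

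With these relations, the remaining identities are routine Kronecker computations.

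For the input matrix, $\Pi B_\xi$ has $j$-th column $\Pi\,{\rm vec}(Z_j)$, where $Z_j = [\,0 \;\; e_j^\top \otimes I_n\,]$. Hence
\begin{equation*}
\Pi\,{\rm vec}(Z_j) = T^{-1}(e_j^\top \otimes I_n)\theta_u = T^{-1}\Theta_u e_j,
\end{equation*}
so $\Pi B_\xi = T^{-1}\Theta_u = B$.

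For the state matrix, $A_z\,{\rm vec}(Z) = {\rm vec}(MZ)$ gives
\begin{equation*}
\Pi A_z\,{\rm vec}(Z) = T^{-1} M Z \theta = T^{-1} M T \,\Pi\,{\rm vec}(Z).
\end{equation*}
Similarly, $\Pi L_z = T^{-1}\Theta_y$. Combining,
\begin{equation*}
\Pi(A_z + L_z C \Pi) = T^{-1}(MT + \Theta_y C)\Pi = T^{-1} T A\, \Pi = A\Pi.
\end{equation*}
Finally, $(\theta^\top \otimes T^{-1})\xi_0 = x_0$ is exactly the condition $\Pi\xi_0 = x_0$ in \eqref{Real:state}, which confirms that \eqref{dyn:real:filter} reproduces the output of \eqref{dyn:LTI}.
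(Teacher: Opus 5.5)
Your argument is correct. It differs from the paper chiefly in that the paper offers no proof of this lemma: both Lemma~\ref{lem:filter} and Lemma~\ref{lem:cano:real} are imported from \cite{gao2025input}. You instead check Definition~\ref{def:real} directly with $\Pi=\theta^\top\otimes T^{-1}$, after extracting from \eqref{state:sys:filter} the observer-form relations $TA=MT+\Theta_yC$ and $TB=\Theta_u$. The Kronecker computations that follow are all right: $\Pi B_\xi=T^{-1}\Theta_u=B$, $\Pi A_z=T^{-1}MT\,\Pi$, $\Pi L_z=T^{-1}\Theta_y$, and hence $\Pi(A_z+L_zC\Pi)=A\Pi$.

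Your route buys self-containment and shows where each hypothesis enters. Hurwitz $M$ gives Hurwitz $F=A_z$, and controllability (through $B\neq0$) gives full row rank of $\Pi$. Observability of $(C,A)$ and distinctness of the eigenvalues of $M$ are used only through the existence of a nonsingular $T$ in Lemma~\ref{lem:filter}. You do rely on reading Lemma~\ref{lem:filter} as asserting one pair $(T,\theta)$ valid for all $x_0$, $Z_0$ and $u$. That is also the reading needed for $\Pi$ in the statement to be a fixed matrix.

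The step you flag as the main obstacle is in fact immediate and needs no uniqueness of $\theta$. With $u\equiv0$, differentiating \eqref{state:sys:filter} at $t=0$ gives $(TA-MT-\Theta_yC)x_0=0$ for every $x_0$. Once that holds, the differentiated identity collapses to $(TB-\Theta_u)u(t)=0$ for almost every $t$, and a constant input yields $TB=\Theta_u$. Similarly, $\theta\neq0$ follows at once from $\Theta_u=TB\neq0$, with no need for the indirect argument about the state being independent of the input.
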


\begin{remark}\rm
\label{rmk:observer}
It is straightforward to see that if $z_0 = \xi_0$, then the state equation \eqref{dyn:real:filter} coincides with \eqref{dyn:filter} satisfying $z(t) \equiv \xi(t), \forall t \ge 0$.
Moreover, the filter \eqref{dyn:filter} can be viewed as an exponentially convergent observer to \eqref{dyn:real:filter},
i.e., $\lim_{t \to \infty} \Vert z(t) - \xi(t)\Vert = 0$.
\end{remark}
% To see this, we define ${\tilde e}(t) := z(t) - \xi(t)$. It follows that
% $$
% \dot {\tilde e} = (I_{\tilde n_z} \otimes M) {\tilde e} + L_z (C x - C_\xi \xi).
% $$
% Note that $C x = C_\xi \xi$. Thus, $\tilde e(t)$ converges to $0$ with an exponential rate since $M$ is Hurwitz.

We equip system \eqref{dyn:real:filter} with the cost in \eqref{form}, and derive an LQR problem as
\begin{equation}
\begin{aligned}
\label{reform:filter}
\min~\int_0^\infty (y^\top Q y + u^\top R u) \mathrm{d} t, 
~{\rm subject~to}~\eqref{dyn:real:filter}.
\end{aligned}
\end{equation}

By Lemmas \ref{lem:real:opt} and \ref{lem:cano:real}, \eqref{reform:filter} admits the
optimal control law  $u(t) = - K^*(\theta^\top \otimes T^{-1}) \xi(t)$, which also solves problem \eqref{form}, provided that the initial conditions satisfy
\begin{equation}
\label{eq:IC}
(\theta^\top \otimes T^{-1}) \xi_0 = x_0 .
\end{equation}
Here,
$K^*$ is the optimal state-feedback control gain of \eqref{form}. Note that, however, this condition cannot be directly verified in practice, since $x_0$ is generally unknown.

%%%%%
%%%%%----------------
\subsection{Output-Feedback Controller}

The previous subsection shows that the canonical non-minimal realization \eqref{dyn:real:filter} leads to an LQR reformulation of \eqref{form}, provided that the initial matching condition \eqref{eq:IC} holds.
As discussed above, however, this condition cannot be enforced in
practice, since both the initial state $x_0$ and the exact plant model are unknown in the present model-free setting.

Consequently, the non-minimal state $\xi(t)$ cannot be used directly for feedback. Instead, we reconstruct the required trajectory through the filter \eqref{dyn:filter}, or equivalently \eqref{dyn:filter:Z}. To account for the resulting transient behavior, we introduce the following higher-dimensional realization.

\begin{lemma}\rm
Suppose that $M$ is Hurwitz with distinct real eigenvalues.
Given the plant \eqref{dyn:LTI}, define the system
\begin{equation}
\begin{aligned}
\label{dyn:overall}
\dot \zeta &= 
\underbrace{
\begin{bmatrix}\!
A_\xi \!\!&L_z CT^{-1} \Gamma \\
0_{n \times n_z} \!\!&\Lambda_M 
\!\end{bmatrix}
}_{A_\zeta} \zeta +
\underbrace{ 
\begin{bmatrix}
B_\xi \\
0_{n \times m} 
\end{bmatrix}
}_{B_\zeta} u, 
~ \zeta(0) = \begmat{0_{n_z}\\ 1_n}
\\
y_\zeta &= 
\underbrace{
\begin{bmatrix}
C_\xi~~~
CT^{-1} \Gamma
\end{bmatrix}
}_{C_{\zeta}}
\zeta,
\end{aligned}
\end{equation}
where $\zeta \in \mathbb R^{n_\zeta}$, $n_\zeta := n_z + n$,  $\Lambda_M = {\rm diag}\{\lambda^M_i\}$, $\lambda^M_i$ is the $i$th eigenvalue of $M$, and $\Gamma$ is a constant matrix. Then, for any $u: \rea_{\ge 0} \to \rea^m$, the outputs $y_\zeta$ and $y$ from \eqref{dyn:overall} and \eqref{dyn:LTI}, respectively, satisfy
$$
y_\zeta(t) \equiv y(t), ~\forall t \ge 0.
$$
\end{lemma}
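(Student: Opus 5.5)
We will compare both outputs with the filter trajectory $z(t)$ generated by \eqref{dyn:filter} from the zero initial condition $z(0)=0_{n_z}$, i.e. $Z_0=0$. The constant matrix $\Gamma$ is chosen so that the transient term in \eqref{state:sys:filter} is generated by the lower block of $\zeta$. Since $M$ has distinct real eigenvalues, write $M = V\Lambda_M V^{-1}$. With $Z_0=0$, Lemma \ref{lem:filter} gives $Tx(t) = Z(t)\theta + e^{Mt}Tx_0$. Moreover,
\begin{equation*}
e^{Mt}Tx_0 = V e^{\Lambda_M t}V^{-1}Tx_0 = V\,\mathrm{diag}(V^{-1}Tx_0)\, e^{\Lambda_M t}1_n .
\end{equation*}
So we set $\Gamma := V\,\mathrm{diag}(V^{-1}Tx_0)$, which is constant because it depends only on $x_0$ and the fixed matrices. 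The lower block $w$ of $\zeta$ obeys $\dot w = \Lambda_M w$ with $w(0)=1_n$, hence $w(t) = e^{\Lambda_M t}1_n$ and $\Gamma w(t) = e^{Mt}Tx_0$. This gives
\begin{equation*}
x(t) = T^{-1}Z(t)\theta + T^{-1}\Gamma w(t).
\end{equation*}

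Next we rewrite $T^{-1}Z\theta$ in terms of $z=\mathrm{vec}(Z)$. By the Kronecker identity $\mathrm{vec}(T^{-1}Z\theta) = (\theta^\top\otimes T^{-1})\mathrm{vec}(Z)$, we get $T^{-1}Z\theta = (\theta^\top\otimes T^{-1})z$. Therefore
\begin{equation*}
y(t) = Cx(t) = C_\xi z(t) + CT^{-1}\Gamma w(t),
\end{equation*}
using \eqref{eq:xi:z}.

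It remains to show that the upper block $v$ of $\zeta$ coincides with $z$. The filter equation \eqref{dyn:filter} reads $\dot z = A_z z + B_\xi u + L_z y$. Substituting the expression for $y$ just obtained gives
\begin{equation*}
\dot z = (A_z + L_zC_\xi)z + L_zCT^{-1}\Gamma w + B_\xi u = A_\xi z + L_zCT^{-1}\Gamma w + B_\xi u, \qquad z(0)=0.
\end{equation*}
This is exactly the upper block of \eqref{dyn:overall}, with the same initial value $0_{n_z}$ and the same signal $w$. By uniqueness of solutions of linear ODEs driven by $u$, $v(t)\equiv z(t)$. Consequently $y_\zeta = C_\xi v + CT^{-1}\Gamma w = y$ for all $t\ge 0$ and any input $u$.

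The step we expect to be the main obstacle is constructing $\Gamma$ so that a fixed initial condition $1_n$ reproduces the transient $e^{Mt}Tx_0$. This is where distinct real eigenvalues are needed: they give a real diagonalization of $M$. We must also check that the argument is not circular, i.e. that $y$ is an exogenous signal of the plant, so that substituting it into the filter is legitimate and uniqueness applies to the resulting system driven only by $u$.
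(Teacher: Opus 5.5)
Your proof is correct and follows essentially the same route as the paper: set $Z_0=0$, write the transient $Tx-Z\theta=e^{Mt}Tx_0$ as $\Gamma\varepsilon(t)$ via the real diagonalization of $M$, and rewrite the filter as $\dot z=A_\xi z+B_\xi u+L_zCT^{-1}\Gamma\varepsilon$ with $y=C_\xi z+CT^{-1}\Gamma\varepsilon$. Your explicit choice $\Gamma=V\,\mathrm{diag}(V^{-1}Tx_0)$ and the uniqueness argument identifying the upper block of $\zeta$ with $z$ spell out steps the paper leaves implicit.
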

\begin{proof}
Define the transverse coordinate $\epsilon \in \mathbb R^n$ \cite{andrieu2016transverse} as
\begin{equation}
\label{transverse}
\epsilon(t) := Tx(t) - Z(t) \theta.
\end{equation}

Let $z_0 = 0_{n_z}$. Then, $\epsilon(0) = \epsilon_0 = Tx_0$. Recalling \eqref{state:sys:filter} gives 
$\epsilon(t) = \exp(Mt) \epsilon_0$, and thus, $\dot \epsilon = M \epsilon$.
Since $M$ has distinct real eigenvalues $\lambda_i^M$ ($i=1,\ldots, n$), there exists an invertible matrix $\tilde T \in \mathbb R^{n \times n}$ such that $M = \tilde T \Lambda_M \tilde T^{-1}$, where $\Lambda_M = {\rm diag}\{\lambda^M_i\}$. 
Therefore, we obtain
$$\epsilon(t) = \tilde T e^{\Lambda_M t} \tilde T^{-1} \epsilon_0 =: \Gamma \varepsilon(t)$$
for some $\Gamma \in \mathbb R^{n \times n}$ with
\begin{equation}
\label{dyn:error:reform}
\dot \varepsilon(t) = \Lambda_M \varepsilon(t), ~\varepsilon(0) = 1_n.
\end{equation}

In light of \eqref{dyn:filter}, we have 
\begin{equation*}
\begin{aligned}
\label{dyn:filter:error}
\dot z 
% % &= A_z z + B_z u + L_z y \\
&= A_\xi z + B_\xi u + L_z(C x - C_\xi z) \\
&= A_\xi z + B_\xi u + L_z C T^{-1} \epsilon.
\end{aligned}
\end{equation*}
In addition, it follows from \eqref{transverse} that $ y = Cx = C_{\xi} z + CT^{-1}\epsilon$.
By defining the extended state $\zeta := [z^\top, \varepsilon^\top]^\top$, we have its dynamics given by \eqref{dyn:overall}. It is clear that $y_\zeta(t) \equiv y(t)$.
\end{proof}

We make the following two important observations.
\begin{enumerate}
\item[(i)] The system \eqref{dyn:overall} shares the same input-output behavior as the plant \eqref{dyn:LTI}.

\item[(ii)] The state trajectory $\zeta(t)$ is available, as it is generated by the filters \eqref{dyn:filter:Z} and \eqref{dyn:error:reform}.
\end{enumerate}

Furthermore, we entail \eqref{dyn:overall} with the same
cost as in \eqref{form}, and then derive an LQR problem as
\begin{equation}
\begin{aligned}
\label{reform}
\min~\int_0^\infty (y^\top Q y + u^\top R u) \mathrm{d} t, 
~{\rm subject~to}~\eqref{dyn:overall}.
\end{aligned}
\end{equation}
The following result holds.

\begin{proposition}\rm
\label{prop:opt:contrls}
Let $u = -K^* x$ be the optimal state-feedback controller for \eqref{dyn:LTI}. Suppose that $M$ is Hurwitz with distinct real eigenvalues. Then, \eqref{reform} admits an optimal controller in the form of  $u = - K_z^* z - K_{\varepsilon}^* \varepsilon$, in which the optimal gain satifies $K_z^* = K^*(\theta^\top \otimes T^{-1})$.
\end{proposition}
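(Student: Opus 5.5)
The plan is to show that the augmented system \eqref{dyn:overall} is itself a canonical non-minimal realization of the plant \eqref{dyn:LTI} in the sense of Definition \ref{def:real}, with projection
\[
\Pi_\zeta := \begmat{\theta^\top \otimes T^{-1} & T^{-1}\Gamma}.
\]
Once this is in place, Lemma \ref{lem:real:opt} gives the optimal law $u = -K^*\Pi_\zeta \zeta = -K^*(\theta^\top\otimes T^{-1}) z - K^* T^{-1}\Gamma \varepsilon$. Reading off the two blocks yields $K_z^* = K^*(\theta^\top\otimes T^{-1})$ and $K_\varepsilon^* = K^*T^{-1}\Gamma$.

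\textbf{Step 1 (intertwining identities).} Let $\Pi_\xi := \theta^\top\otimes T^{-1}$. Lemma \ref{lem:cano:real} says \eqref{dyn:real:filter} is canonical, so $\Pi_\xi A_\xi = A\Pi_\xi$, $\Pi_\xi B_\xi = B$, and $C_\xi = C\Pi_\xi$. The input and output relations for $\Pi_\zeta$ are then immediate: $\Pi_\zeta B_\zeta = \Pi_\xi B_\xi = B$ and $C\Pi_\zeta = [C_\xi~~CT^{-1}\Gamma] = C_\zeta$.

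For the state relation we need
\[
\Pi_\zeta A_\zeta = \begmat{A\Pi_\xi & \Pi_\xi L_z CT^{-1}\Gamma + T^{-1}\Gamma\Lambda_M}
\]
to equal $A\Pi_\zeta = [A\Pi_\xi~~AT^{-1}\Gamma]$. This reduces to the identity $\Pi_\xi L_z C T^{-1}\Gamma + T^{-1}\Gamma\Lambda_M = AT^{-1}\Gamma$. I would derive it dynamically rather than by Kronecker algebra. Since $x = T^{-1}(Z\theta + \epsilon) = \Pi_\xi z + T^{-1}\Gamma\varepsilon$ holds along every trajectory (by \eqref{transverse}), differentiating gives
\[
\Pi_\xi(A_\xi z + B_\xi u + L_zCT^{-1}\Gamma\varepsilon) + T^{-1}\Gamma\Lambda_M\varepsilon = A(\Pi_\xi z + T^{-1}\Gamma\varepsilon) + Bu.
\]
After cancelling the terms matched by the identities of Lemma \ref{lem:cano:real}, we are left with $(\Pi_\xi L_zCT^{-1}\Gamma + T^{-1}\Gamma\Lambda_M - AT^{-1}\Gamma)\varepsilon(t) = 0$ for all $t$. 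The signals $e^{\lambda_i^M t}$ have distinct exponents and are therefore linearly independent, so $\varepsilon(t)$ spans $\mathbb R^n$ over time, and the matrix identity follows.

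Equivalently, one can verify $\Pi_\xi A_z = M\Pi_\xi$ and $\Pi_\xi L_z = (\cdot)$ directly from the Kronecker structure, which is essentially how \eqref{state:sys:filter} is obtained in Lemma \ref{lem:filter}. This gives $\Pi_\xi L_z C = A - M$ on the relevant range, and combining it with $M\Gamma = \Gamma\Lambda_M$ (from $\Gamma = \tilde T\,{\rm diag}(\tilde T^{-1}\epsilon_0)$) gives the identity as well.

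\textbf{Step 2 (rank and Hurwitz conditions).} $\Pi_\zeta$ has full row rank because $\Pi_\xi$ already does. For the "$F$" part of Definition \ref{def:real}, take $F_\zeta = {\rm blkdiag}(A_z, \Lambda_M)$ and $L_\zeta = [L_z^\top~~0]^\top$. Then $F_\zeta + L_\zeta C_\zeta = A_\zeta$, and $F_\zeta$ is Hurwitz since both $M$ and $\Lambda_M$ are.

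\textbf{Step 3 (apply Lemma \ref{lem:real:opt}).} Invoking the lemma gives solvability of \eqref{reform}, the optimal Riccati solution $\Pi_\zeta^\top P^*\Pi_\zeta$, and the optimal gain $R^{-1}B_\zeta^\top \Pi_\zeta^\top P^*\Pi_\zeta = K^*\Pi_\zeta$. Its $z$-block is the claimed $K_z^*$.

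The main obstacle is Step 1's block identity involving $\Gamma$. In particular, $\Gamma$ depends on $\epsilon_0 = Tx_0$, so one must check that $T^{-1}\Gamma$ genuinely satisfies the intertwining relation for the given $\Gamma$. If $\epsilon_0$ has a zero modal component, $\Gamma$ is singular and the trajectory argument breaks. In that case I would fall back on the direct algebraic identity $M\Gamma = \Gamma\Lambda_M$, which holds for any $\epsilon_0$.
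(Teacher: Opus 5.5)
Your proof is correct, but it takes a different route from the paper. The paper never shows that \eqref{dyn:overall} is canonical. It uses only the block-triangular structure: $A_\zeta$ has a zero $(2,1)$ block, $B_\zeta$ has a zero lower block, and the first block of $C_\zeta$ is $C_\xi$. As a result, the $(1,1)$ block of \eqref{ARE:zeta} is exactly the ARE of the realization \eqref{dyn:real:filter}. So $P_{zz}^*$ solves the ARE of \eqref{reform:filter}, $K_z^*$ is its optimal gain, and Lemma~\ref{lem:real:opt} is then applied to the realization of Lemma~\ref{lem:cano:real}, not to the augmented system.

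That argument needs no information about the coupling block $L_z C T^{-1}\Gamma$, but it leaves $K_\varepsilon^*$ undetermined. Your route costs the extra intertwining identity $\Pi_\xi L_z C T^{-1}\Gamma + T^{-1}\Gamma\Lambda_M = A T^{-1}\Gamma$. In exchange it yields $P_\zeta^* = \Pi_\zeta^\top P^* \Pi_\zeta$ and $K_\varepsilon^* = K^* T^{-1}\Gamma$. Moreover, since $\Pi_\zeta \zeta(t) = x(t)$ (in particular $\Pi_\zeta\zeta(0) = T^{-1}\Gamma 1_n = x_0$), the full augmented law $-K_\zeta^*\zeta$ coincides with $-K^* x$ for all $t \ge 0$.

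Two side remarks on your Step 1:
\begin{enumerate}
\item The trajectory argument does not break when $\Gamma$ is singular. The spanning property you use is that of $\varepsilon(t) = e^{\Lambda_M t} 1_n$, which involves neither $\Gamma$ nor $x_0$. So the identity holds for every $x_0$, and no fallback is needed.
\item The Kronecker shortcut is misstated. In fact $\Pi_\xi A_z = \theta^\top \otimes T^{-1}M = T^{-1}MT\,\Pi_\xi$, not $M\Pi_\xi$. Combined with $\Pi_\xi A_\xi = A\Pi_\xi$ and the full row rank of $\Pi_\xi$, this gives $\Pi_\xi L_z C = A - T^{-1}MT$ exactly, with no restriction to a range. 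Then $T^{-1}M\Gamma = T^{-1}\Gamma\Lambda_M$ closes the identity. With $A - M$ as you wrote it, you would additionally need $MT^{-1}\Gamma = T^{-1}M\Gamma$, which fails in general.
\end{enumerate}
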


\begin{proof}
We first show the stabilizability of $(A_\zeta, B_\zeta)$.
Clearly, the uncontrollable modes of \eqref{dyn:overall} are contained in the eigenvalues of $\Lambda_M$ and the uncontrollable modes of $(A_\xi, B_\xi)$. Since $\Lambda_M$ is Hurwitz and $(A_\xi, B_\xi)$ stabilizable, $(A_\zeta, B_\zeta)$ is stabilizable.
Similarly, both $(C_\zeta, A_\zeta)$ and $(\sqrt{Q}C_\zeta, A_\zeta)$ are detectable.
Thus, \eqref{reform} is solvable by a state-feedback controller.

Let $P^*_\zeta \in \mathbb S_+^{n_\zeta}$ be the unique solution to
\begin{equation}
\label{ARE:zeta}
A_\zeta^\top P_\zeta + P_\zeta A_\zeta - P_\zeta B_\zeta R^{-1} B_\zeta^\top P_\zeta + C_\zeta^\top Q C_\zeta = 0.
\end{equation}
Note that $P^*_\zeta$ admits the block partition
$$
P_\zeta^* =
\begin{bmatrix}
P_{zz}^* & P_{z\varepsilon}^* \\
P_{\varepsilon z}^* & P_{\varepsilon\varepsilon}^*
\end{bmatrix}.
$$
As a result, the optimal control law for \eqref{reform} is 
\begin{equation*}
u = - K_\zeta^* \zeta = - R^{-1}B_\zeta^\top P^*_\zeta \zeta =: - K_z^* z - K_\varepsilon^* \varepsilon,
\end{equation*}
where $K_z^* :=  R^{-1}B_{\xi}^\top P_{zz}^*$, and
$K_\varepsilon^* := R^{-1}B_{\xi}^\top P_{z \epsilon}^*$.  Plugging \eqref{dyn:overall} into \eqref{ARE:zeta}, we derive
\begin{equation*}
\begin{aligned}
&A_{\xi}^\top P^*_{zz} + P^*_{zz} A_{\xi} - P^*_{zz} B_{\xi} R^{-1} B_{\xi}^\top P^*_{zz} + C_{\xi}^\top Q C_{\xi} = 0. 
\end{aligned}
\end{equation*}

Then, it is straightforward to verify that $u = - K_z^* \xi$ is the optimal controller for problem \eqref{reform:filter}.
Since  \eqref{dyn:real:filter} is a canonical non-minimal realization of \eqref{dyn:LTI},
it holds that $K_z^*=K^*(\theta^\top \otimes T^{-1})$ by Lemma \ref{lem:real:opt}.
\end{proof}

\begin{remark}\rm
The overall dynamics \eqref{dyn:overall} extends the state of \eqref{dyn:real:filter} by introducing the auxiliary variable $\varepsilon$. Proposition \ref{prop:opt:contrls} shows that 
such an augmentation does not alter the optimal feedback gain associated with the realization \eqref{dyn:real:filter}. In particular, $K_z^*$ coincides exactly with the optimal gain  of \eqref{reform:filter}.
\end{remark}

\begin{remark}\rm
\label{rmk:contrl}
The trajectories of $(u,\zeta)$ for system \eqref{dyn:overall} are available through the filtering dynamics \eqref{dyn:filter:Z} and \eqref{dyn:error:reform}. Under suitable excitation conditions, these data can be used to estimate the optimal control gains, denoted by $(\hat K_z^*, \hat K_\varepsilon^*)$. By Proposition \ref{prop:opt:contrls}, the component $\hat K_z^*$ serves as an estimate of the optimal control gain associated with the non-minimal realization \eqref{dyn:real:filter}. Since $z(t)$ converges exponentially to $\xi(t)$ as $t \to \infty$, the resulting control law $u=-\hat K_z^* z$ provides an exponentially convergent estimate of the optimal control law. This forms the basic idea for addressing Problem \ref{prob}.
\end{remark}

%%%%
%%%%-----------------
\section{Data-Driven Algorithm}

In this section, we develop an iterative algorithm, using only the input-output trajectory data, to solve Problem \ref{prob} by leveraging the optimal control of \eqref{reform}. Note that $A_\zeta$ and $C_\zeta$ are unknown, whereas the input matrix $B_\zeta$ is known in our design. Since the signals $(u,\zeta)$ are available, this makes it possible to design data-driven optimal controller for \eqref{reform} using ADP.

The optimal control law for \eqref{reform} is given by
\begin{equation}
\label{reform:opt:gain}
u(t) = - K_\zeta^* \zeta(t),
\end{equation}
where $K_\zeta^* = R^{-1} B_\zeta^\top P_\zeta^*$, and $P_\zeta^* \in \mathbb S_+^{n_\zeta}$ is the unique solution to the ARE
\begin{equation}
\label{ARE:reform}
A_\zeta^\top P_\zeta + P_\zeta A_\zeta - P_\zeta B_\zeta R^{-1} B_\zeta^\top P_\zeta + C_\zeta^\top Q C_\zeta = 0.
\end{equation}

We are now ready to solve Problem \ref{prob} by combining the fundamental idea in Section \ref{sec:3} with the VI algorithm.

To iteratively solve the ARE \eqref{ARE:reform} and compute the optimal control gain $K^*_\zeta$ for \eqref{reform}, we use the stepsize sequence $\{\gamma_i\}$ defined in \eqref{VI:stepsize}, and introduce a collection of nonempty bounded subsets $\{\mathbb{\tilde B}_j\}$ satisfying
$\mathbb{\tilde B}_j \subset \mathbb{\tilde B}_{j+1}, 
j \in \mathbb N$ and $\lim_{j \to \infty} \mathbb{\tilde B}_j = \mathbb S_+^{n_\zeta}$.
Following Algorithm \ref{alg:VI:modelbased}, we update $P_{\zeta, i}$ via
\begin{equation}
\begin{aligned}
\label{VI:P1}
\tilde P_{\zeta, i+1} =&~ P_{\zeta, i} + \gamma_i(A_\zeta^\top P_{\zeta, i} + P_{\zeta, i} A_\zeta \\
&- P_{\zeta, i} B_\zeta R^{-1} B_\zeta^\top P_{\zeta, i} + C_\zeta^\top Q C_\zeta).
\end{aligned}
\end{equation}

Let $K_{\zeta, i} = R^{-1} B_\zeta^\top P_{\zeta, i}$.
Denote by 
$$
O_{\zeta, i} := A_{\zeta}^\top P_{\zeta, i} + P_{\zeta, i} A_{\zeta} + C_\zeta^\top Q C_\zeta.
$$
Then, \eqref{VI:P1} can be rewritten as
\begin{equation}
\begin{aligned}
\label{VI:P2}
\tilde P_{\zeta, i+1} = P_{\zeta, i} + \gamma_i(O_{\zeta, i} 
- K_{\zeta, i}^\top R K_{\zeta, i}).
\end{aligned}
\end{equation}
Therefore, to implement the VI algorithm, it is essential to compute $O_{\zeta, i}$ using the trajectories of $u(t)$, $\zeta(t)$ and $y(t)$.

Taking the derivative of $\zeta^\top P_{\zeta, i} \zeta$ along \eqref{dyn:overall}, we have
\begin{equation*}
\begin{aligned}
\frac {\mathrm{d}}{\mathrm{d}t} (\zeta^\top P_{\zeta, i} \zeta) 
= \zeta^\top O_{\zeta, i} \zeta + 2u^\top B_\zeta^\top P_{\zeta, i}\zeta - y^\top Q y.
\end{aligned}
\end{equation*}

Integrating the above equation over $[t_j, t_{j+1}]$  and repeating this procedure on multiple time intervals, we obtain a set of linear equations represented in the following matrix form
\begin{equation}
\label{VI:eq2}
I_{\zeta\zeta} \cdot {\rm vec}(O_{\zeta, i}) = \Xi_i,
\end{equation}
where $l$ is a positive integer,  $0 \le t_0 < t_1 < \dots < t_l$, and
$$
\begin{aligned}
\Xi_i & = \big[\delta_{\zeta\zeta} - 2 I_{\zeta u} \cdot (I_{n_\zeta} \otimes  B_\zeta^\top)\big] {\rm vec}(P_{\zeta, i}) + I_{yy} \cdot {\rm vec}(Q),
\\
\delta_{\zeta \zeta} &= \big[\zeta \otimes \zeta|_{t_0}^{t_1}, ~\zeta \otimes \zeta|_{t_1}^{t_2}, ~\dots,~ \zeta \otimes \zeta|_{t_{l-1}}^{t_l} \big]^\top, \\
I_{\zeta \zeta}
&= \begmat{\int_{t_0}^{t_1} \zeta \otimes \zeta \mathrm{d}t & \int_{t_1}^{t_2} \zeta \otimes \zeta \mathrm{d}t & \dots & \int_{t_{l-1}}^{t_l} \zeta \otimes \zeta \mathrm{d}t  }^\top,
\\
I_{\zeta u}&= 
\begmat{
\int_{t_0}^{t_1} \zeta \otimes u \mathrm{d}t & 
\int_{t_1}^{t_2} \zeta \otimes u \mathrm{d}t & \dots & \int_{t_{l-1}}^{t_l} \zeta \otimes u \mathrm{d}t 
}^\top \!, 
\\
I_{yy}
&= 
\begmat{\int_{t_0}^{t_1} y \otimes y \mathrm{d}t& \int_{t_1}^{t_2} y \otimes y \mathrm{d}t & \dots & \int_{t_{l-1}}^{t_l} y \otimes y \mathrm{d}t}^\top \!.
\end{aligned}
$$

Hence, the symmetric matrix $O_{\zeta,i}$ can be estimated from trajectory data by solving  \eqref{VI:eq2}, which has a unique solution if 
\begin{equation}
\label{ass:rank}
{\rm rank}\;\{I_{\zeta \zeta}\} = {{n_\zeta(n_\zeta + 1)} \over 2}.
\end{equation}
This rank condition depends on the trajectory of $\zeta(t)$, determined by the input $u(t)$.
In practice, the excitation condition can be ensured by injecting suitable probing signals, such as random noise \cite{al2007model}, exponentially decreasing probing
noise \cite{vamvoudakis2011multi} and sinusoidal signals  \cite{jiang2012computational}.

We are now ready to present our approach for solving Problem \ref{prob}, which is summarized in Algorithm \ref{alg:VI:reform}. Its properties are given as follows.

\begin{algorithm}[tp]
\caption{Data-Driven Control for Solving Problem \ref{prob}}
\label{alg:VI:reform}
\begin{algorithmic}[1]
\STATE \textbf{Input:} Initialize $P_{\zeta, 0} \in \mathbb S_{++}^{n_\zeta}$, set $i = 0$, $j = 0$
and a tolerance $\delta > 0$, give the sets $\{\tilde{\mathbb B}_j\}$ and the stepsizes 
$\{\gamma_i\}$, and select a Hurwitz matrix $M$.
\STATE \textbf{Output:} The control law for Problem \ref{prob}.
\STATE Construct the filter
$$\dot Z = M Z + \begmat{y^\top \otimes I_n & u^\top \otimes I_n}, 
~Z_0 = 0_{n \times \tilde n_z}.$$
\STATE Compute $\Lambda_M$, and derive the dynamics 
$$\dot \varepsilon(t) = \Lambda_M \varepsilon(t), ~\varepsilon(0) = 1_n.$$
\STATE Apply a measurable locally essentially bounded input $u$ to the plant \eqref{dyn:LTI}, collect $u(t)$ and $y(t)$, and generate $\zeta(t)$ through \eqref{dyn:filter:Z} and \eqref{dyn:error:reform} until \eqref{ass:rank} is satisfied.
\LOOP
\STATE Solve $O_{\zeta, i}$ from \eqref{VI:eq2}
\STATE $K_{\zeta, i} = R^{-1} B_\zeta^\top P_{\zeta,i}$
\STATE $\tilde P_{\zeta, i+1} = P_{\zeta, i} + \gamma_i\!\big(O_{\zeta, i} - K_{\zeta, i}^\top R K_{\zeta, i} \big)$
\IF{$\tilde P_{\zeta, i+1} \notin \mathbb{\tilde B}_j$}
\STATE $P_{\zeta, i+1} = P_{\zeta, 0}$,~{\rm and} $j \leftarrow j+1$
\ELSIF{$\|\tilde P_{\zeta, i+1} - P_{\zeta, i}\|/\gamma_i \le \delta$}
\STATE \textbf{return} $P_{\zeta, i}$ and $K_{\zeta, i}$
\ELSE
\STATE $P_{\zeta, i+1} = \tilde P_{\zeta, i + 1}$
\ENDIF
\STATE $i \leftarrow i+1$
\ENDLOOP
\STATE Let $\hat K_{\zeta}^* := K_{\zeta, i}$ denote the estimate of the optimal gain $K_{\zeta}^*$. Partition $\hat K_{\zeta}^*$ as $\hat K_{\zeta}^* = [\hat K_{z}^*, \hat K^*_{\varepsilon}]$.
\STATE \textbf{Controller:} $u(t) = - \hat K^*_z {\rm vec}(Z(t))$.
\end{algorithmic}
\end{algorithm}

\begin{theorem}\rm
\label{thm:VI}
Consider the plant \eqref{dyn:LTI} with access only to the signals $(u,y)$, the filter \eqref{dyn:filter:Z}, where $M$ is Hurwitz with distinct real eigenvalues, and the dynamics \eqref{dyn:error:reform}. Suppose that the rank condition \eqref{ass:rank} holds for the collected data.
Then, the sequence $\{K_{\zeta,i}\}_{i=0}^{\infty}$, generated by Algorithm \ref{alg:VI:reform}, converges to the optimal state-feedback control gain $K_\zeta^*$ associated with \eqref{reform}.
Let $\hat K_{\zeta}^{*}$ be the estimate of $K_\zeta^*$, and partition 
$K_\zeta^* = [K_z^*, K_\varepsilon^*]$ and
$\hat K_\zeta^* = [\hat K_z^*, \hat K_\varepsilon^*]$. It holds that $K_z^* = K^*(\theta^\top \otimes T^{-1})$, where $K^*$ is the optimal state-feedback gain of \eqref{form}.
With the controller
\begin{equation}
\label{Contrl}
u(t) = - \hat K_{z}^* {\rm vec}(Z(t)),
\end{equation}
the resulting closed-loop system, consisting of \eqref{dyn:LTI}, \eqref{dyn:filter:Z} and \eqref{Contrl}, is exponentially stable.
\end{theorem}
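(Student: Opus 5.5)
The argument has three parts: convergence of the learned gains, identification of the $z$-block with $K^*(\theta^\top\otimes T^{-1})$, and exponential stability of the loop closed with the truncated gain.

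\emph{Convergence.} Under \eqref{ass:rank}, the data-based update reproduces the model-based update exactly. For each $i$, the matrix $O_{\zeta,i}$ recovered from \eqref{VI:eq2} equals $A_\zeta^\top P_{\zeta,i}+P_{\zeta,i}A_\zeta+C_\zeta^\top QC_\zeta$. This holds because the identity $\frac{\mathrm d}{\mathrm dt}(\zeta^\top P_{\zeta,i}\zeta)=\zeta^\top O_{\zeta,i}\zeta+2u^\top B_\zeta^\top P_{\zeta,i}\zeta-y^\top Qy$ is exact along \eqref{dyn:overall}. It uses $y_\zeta\equiv y$ from the preceding lemma and the fact that $\zeta$ is generated exactly by \eqref{dyn:filter:Z} with $Z_0=0$ and \eqref{dyn:error:reform}. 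Full column rank of $I_{\zeta\zeta}$ on the symmetric parametrization then gives uniqueness of $O_{\zeta,i}$. Consequently, lines 7--16 of Algorithm \ref{alg:VI:reform} coincide with Algorithm \ref{alg:VI:modelbased} applied to $(A_\zeta,B_\zeta,C_\zeta)$. Proposition \ref{prop:opt:contrls} gives stabilizability of $(A_\zeta,B_\zeta)$ and detectability of $(\sqrt Q C_\zeta,A_\zeta)$. These are the hypotheses of the convergence result in \cite{bian2016value}, so $P_{\zeta,i}\to P_\zeta^*$. Hence $K_{\zeta,i}=R^{-1}B_\zeta^\top P_{\zeta,i}\to K_\zeta^*$ by continuity.

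\emph{Structure of the gain.} The identity $K_z^*=K^*(\theta^\top\otimes T^{-1})$ is exactly Proposition \ref{prop:opt:contrls}. I would cite it directly, noting that the partition of $K_\zeta^*$ matches $B_\zeta=[B_\xi^\top,\,0]^\top$.

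\emph{Stability.} This is the main step. I take $\hat K_z^*=K_z^*$, or a gain close enough to it that the perturbation argument below still applies. Write $\Theta:=\theta^\top\otimes T^{-1}$, so that $K_z^*=K^*\Theta$. Use the transverse coordinate \eqref{transverse}, $\epsilon=Tx-Z\theta$, which obeys $\dot\epsilon=M\epsilon$ for every input. Since $Z\theta=\Theta' z$ for a suitable fixed matrix, $\Theta z=T^{-1}Z\theta=x-T^{-1}\epsilon$. The control law therefore becomes
\[
u=-K^*\Theta z=-K^*x+K^*T^{-1}\epsilon .
\]
In the coordinates $(x,\epsilon)$ the closed loop is
\[
\dot x=(A-BK^*)x+BK^*T^{-1}\epsilon,\qquad \dot\epsilon=M\epsilon .
\]
This is block triangular with Hurwitz diagonal blocks $A-BK^*$ (LQR optimality) and $M$, so $(x,\epsilon)$ decays exponentially.

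The remaining difficulty is $z$ itself. The closed loop in $(x,z)$ has dimension $n+n_z$, larger than $n+n$, so exponential decay of $x$ and $\epsilon$ does not by itself control all of $z$. I would use $\dot z=A_z z+B_\xi u+L_z y$, where $A_z=I\otimes M$ is Hurwitz and the inputs $u=-K^*x+K^*T^{-1}\epsilon$ and $y=Cx$ both decay exponentially. This input-to-state argument gives exponential convergence of $z$. For the closed-loop matrix in $(x,z)$, this shows that every mode is stable: the coordinate change $(x,z)\mapsto(x,\epsilon,z)$ produces a block-triangular structure whose diagonal blocks are $A-BK^*$, $M$, and $A_z$, each Hurwitz.

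If $\hat K_z^*$ is only an approximation of $K_z^*$, I would invoke a robustness argument. Eigenvalues depend continuously on the entries of the closed-loop matrix, so the Hurwitz property persists for $\hat K_z^*$ sufficiently close to $K_z^*$. This closeness is guaranteed by taking the tolerance $\delta$ small, using the convergence established in the first part.
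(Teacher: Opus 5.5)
Your convergence and gain-structure steps coincide with the paper's, which likewise reads Algorithm~\ref{alg:VI:reform} as model-based VI on $(A_\zeta,B_\zeta,C_\zeta)$ and cites Proposition~\ref{prop:opt:contrls}. Your stability step is valid but takes a different route.

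\textbf{The paper's route.} It never returns to $K^*$ in the stability step. Because $A_\zeta-B_\zeta K_\zeta^*$ is block upper triangular with diagonal blocks $A_\xi-B_\xi K_z^*$ and $\Lambda_M$, optimality for the augmented problem alone makes $A_\xi-B_\xi K_z^*$ Hurwitz. Using $y=C_\xi z+CT^{-1}\epsilon$, the closed loop becomes $\dot z=(A_\xi-B_\xi K_z^*)z+L_zCT^{-1}\epsilon$, $\dot\epsilon=M\epsilon$ (with $\epsilon=\Gamma\varepsilon$). Then $x$ follows from \eqref{state:sys:filter}.

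\textbf{Your route.} You use $K_z^*=K^*(\theta^\top\otimes T^{-1})$ to write $u=-K^*x+K^*T^{-1}\epsilon$. You get decay of $(x,\epsilon)$ from $A-BK^*$ and $M$, and then pass to $z$ through the Hurwitz matrix $A_z$. Note that $(x,z)\mapsto(x,\epsilon,z)$ is not a coordinate change but an injective lift into a larger block-triangular system. That is still sufficient, since closed-loop eigenvectors map to eigenvectors of the lifted matrix. By contrast, the paper's pair $(z,\epsilon)$ is a genuine change of coordinates, because $x=T^{-1}(\epsilon+Z\theta)$.

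\textbf{What each route buys.} The paper's decomposition holds for every gain: the closed-loop spectrum is exactly $\mathrm{spec}(A_\xi-B_\xi\hat K_z^*)\cup\mathrm{spec}(M)$. So an approximate $\hat K_z^*$ only needs to keep $A_\xi-B_\xi\hat K_z^*$ Hurwitz. Your triangular structure exists only for the exact gain, which forces the fallback to eigenvalue continuity of the full $(x,z)$ matrix. Your claim that small $\delta$ implies $\hat K_z^*$ is near $K_z^*$ also needs a sentence of justification. A small Riccati residual alone does not certify proximity to $P_\zeta^*$; however, along the convergent VI sequence the stopping index either diverges or freezes at a fixed point, which must then be $P_\zeta^*$.

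In return, your route localizes the closed-loop spectrum in $\mathrm{spec}(A-BK^*)\cup\mathrm{spec}(M)$. It also exhibits $u+K^*x=K^*T^{-1}\epsilon\to0$ exponentially, so the learned controller asymptotically reproduces the optimal state feedback. This is item~2 of Problem~\ref{prob} and is what Fig.~\ref{Fig4} shows. Finally, your route explicitly confronts the distinction between $\hat K_z^*$ and $K_z^*$, which the paper's proof passes over by writing $K_z^*$ in the closed loop.
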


\begin{proof}
With the filter \eqref{dyn:filter:Z}, the iterative procedure in Algorithm \ref{alg:VI:reform} for computing $P_{\zeta,i}$ can be interpreted as the VI applied to the extended dynamics \eqref{dyn:overall}. Hence, following arguments similar to those in \cite[Theorem 4.2]{bian2016value}, we obtain
\[
\lim_{i \to \infty} P_{\zeta,i} = P_{\zeta}^*, 
\qquad 
\lim_{i \to \infty} K_{\zeta,i} = K_{\zeta}^*,
\]
where $P_{\zeta}^* \in \mathbb S_+^{n_{\zeta}}$ solves \eqref{ARE:reform}, and $K_{\zeta}^*$ is the optimal control gain for \eqref{reform}.
By Proposition \ref{prop:opt:contrls}, we derive $K_z^* = K^*(\theta^\top \otimes T^{-1})$, where $K^*$ be the optimal state-feedback gain of \eqref{form}.

Since $K_\zeta^*$ is the optimal state-feedback gain for \eqref{reform},
the matrix $(A_\zeta - B_\zeta K_\zeta^*)$ is Hurwitz. 
It follows from \eqref{dyn:overall} that the eigenvalues of $A_\zeta - B_\zeta K_\zeta^*$ consist of those of 
$(A_\xi-B_\xi K_z^*)$ and $\Lambda_M$, which implies that $(A_\xi-B_\xi K_z^*)$ is Hurwitz.
Plugging \eqref{Contrl} into \eqref{dyn:filter}, we derive
$$\dot z = (A_\xi - B_\xi K_z^*) z + L_z CT^{-1} \Gamma \varepsilon.$$
Thus, $z(t)$ decays exponentially since $\varepsilon(t)$ does. 
By \eqref{state:sys:filter}, we further conclude that $x(t)$ exponentially converges to $0$.
\end{proof}

\begin{remark}\rm
Since $B_\zeta$ in \eqref{dyn:overall} is known, $K_{\zeta,i}$ can be computed directly after obtaining $O_{\zeta,i}$. Hence, unlike \cite{bian2016value}, we only compute $O_{\zeta,i}$ from \eqref{VI:eq2}, instead of solving for $O_{\zeta,i}$ and $K_{\zeta,i}$ simultaneously. This leads to a weaker rank condition than that in \cite{bian2016value} and reduces the per-iteration computational burden. A similar idea can be found in \cite{lin2025new}.
\end{remark}

\begin{remark}\rm
Theorem \ref{thm:VI} shows that the optimal controller of \eqref{form} can be derived using only input-output data, without requiring access to the system state. This is achieved by introducing the filter \eqref{dyn:filter:Z} together with the transverse  dynamics \eqref{dyn:error:reform}.
By Proposition \ref{prop:opt:contrls}, the optimal control gain of \eqref{reform} admits an explicit relation with that of \eqref{form}.
Therefore, once \eqref{reform} is solved by a model-free algorithm, Problem \ref{prob} can be addressed.
\end{remark}

%%%%
%%%%-----------------
\section{Illustrative Example}

In this section, we validate the proposed approach via simulations on an output-feedback normal-acceleration regulator for a linearized F-16 aircraft model.
The system state of interest is $x = [\alpha, q, \delta_e]^\top$, where $\alpha$, $q$, and $\delta_e$ denote the angle of attack, pitch rate, and elevator deflection angle, respectively.
The linearized model is given by \cite{stevens2015aircraft}
\begin{equation*}
\begin{aligned}
\dot x &=
{\setlength{\arraycolsep}{3pt} 
\begin{bmatrix}
-1.01887 &0.90506  &-0.00215 \\
0.82225  &-1.07741 &-0.17555 \\
0        &0        &-20.2   
\end{bmatrix}}x +  
{\setlength{\arraycolsep}{3pt} 
\begin{bmatrix}
0 \\
0 \\
20.2 
\end{bmatrix}}u, \\
y &= 
{\setlength{\arraycolsep}{4pt} 
\begin{bmatrix}
0       &57.2958       &0    
\end{bmatrix}}x,
\end{aligned}
\end{equation*}
where $u$ is the elevator control input, and $y$ denotes the pitch rate with the factor of 57.2958 converting angles from radians to degrees.
The weighting matrices in the cost \eqref{form} are set to $Q = 1$ and $R = 1$.
It is straightforward to verify that $(A, B)$ is controllable, and both $(C, A)$ and $(\sqrt{Q}C, A)$ are observable.
Moreover, the optimal state-feedback control gain is $K^* = [-5.4636, -49.779, 0.36616]$.

We choose a Hurwitz matrix $M$ with distinct real eigenvalues in \eqref{dyn:filter:Z}. 
Following \cite{gao2025input}, this yields
\begin{equation*}
\begin{aligned}
M \!=\! 
{\setlength{\arraycolsep}{3pt} 
\begin{bmatrix}
-1  &0   &0  \\
1   &-2  &0   \\
0   &2   &-3
\end{bmatrix}}\!,~
T \!=\! 
{\setlength{\arraycolsep}{3pt} 
\begin{bmatrix}
 423.92	   &9.7288	 &-0.13642 \\
-232.81	   &793.37   &-7.6176  \\
-155.30	   &-192.11	 &2.8659
\end{bmatrix}}\!, \\
\theta = [6.6833, 8.9277, -36.593, 
-2.7558, -153.87, 57.891]^\top.
\end{aligned}
\end{equation*}

We now validate the performance of Algorithm \ref{alg:VI:reform}.
The initial feedback gain is set to $K_{\zeta, 0} = 0_{1 \times n_\zeta}$.
To ensure the persistent excitation,
the initial control input is chosen as $u(t) = - K_{\zeta, 0} \zeta(t) + e(t)$, where $e(t) = \sum_{\kappa = 1}^{100}(20 + \iota_\kappa) \sin(\omega_\kappa t + \phi_\kappa)$ with $\iota_\kappa$, $\omega_\kappa$ and $\phi_\kappa$ randomly sampled from $[0,20]$, $[-500,500]$ and $[0,2\pi]$.
The input $u(t)$, output $y(t)$, and state $\zeta(t)$ are collected every $0.01\mathrm{s}$ over the interval $[0,5]$s.
We choose $\gamma_i = (1 + i)^{-1}$ and $\tilde B_j = 2^j \cdot 5 I_{n_\zeta}$.
We then verify the rank condition \eqref{ass:rank} and start the VI algorithm.

Fig.~\ref{Fig2} shows the trajectories of
$\|P_{\zeta,i}-P_\zeta^*\|_F/\|P_\zeta^*\|_F$ and
$\|K_{\zeta,i}-K_\zeta^*\|_F/\|K_\zeta^*\|_F$.
It is clear that $P_{\zeta, i}$
and $K_{\zeta, i}$ converge to their optimal values 
$P^*_\zeta$ and $K^*_\zeta$.

With the controller \eqref{Contrl}, Fig.~\ref{Fig3} shows the trajectories of $x(t)$ and $\log(\| Z(t)\|_F)$, demonstrating that the closed-loop system, consisting of \eqref{dyn:LTI}, \eqref{dyn:filter:Z} and \eqref{Contrl}, is stable.

Fig.~\ref{Fig4} compares the control input and output trajectories generated by the controller $u(t)=-K_z^{*}z(t)$ and the optimal state-feedback controller $u(t)=-K^{*}x(t)$, where both $K_z^{*}$ and $K^{*}$ are characterized in Theorem \ref{thm:VI}.
It is clear that the control input produced by the learned controller converges to that of the optimal state-feedback controller. 
This is consistent with the results established in Theorem \ref{thm:VI}.

\begin{figure}[!t]
\centering

\begin{subfigure}{0.498\linewidth}
\centering
\begin{tikzpicture}
\begin{axis}[
width=1.16\linewidth,
height=4.65cm,
xmin=0,
xmax=2000,
ymin=0,
ymax=1,
xtick={0,500,1000,1500,2000},
ytick={0,0.25,0.5,0.75,1},
scaled x ticks=false,
xlabel={Number of Iterations},
xlabel style={yshift=0.8mm},
ylabel={},
grid=major,
major grid style={gray!25},
tick label style={font=\scriptsize},
label style={font=\footnotesize},
legend style={
at={(0.97,0.97)},
anchor=north east,
font=\fontsize{6.5}{6.5}\selectfont,
draw=black,
fill=white,
inner sep=0.8pt,
legend image post style={xscale=0.45}
},
xticklabel style={/pgf/number format/1000 sep={}},
axis line style={black},
tick style={black}
]
\addplot[color=cyan!60!blue,line width=1.0pt]
table[x=iter,y=P_error]{fig1_data.dat};

\legend{
$\|P_{\zeta,i}\!-\! P_{\zeta}^{*}\|_F/\|P_{\zeta}^{*}\|_F$}
\end{axis}
\end{tikzpicture}

\vspace{-2.6mm}
\makebox[1.12\linewidth][c]{\footnotesize (a)}
\phantomcaption
%\caption{}
\label{fig:P_error}
\end{subfigure}
\hspace{-3mm}
\begin{subfigure}{0.498\linewidth}
\centering
\begin{tikzpicture}
\begin{axis}[
width=1.16\linewidth,
height=4.65cm,
xmin=0,
xmax=2000,
ymin=0,
ymax=1,
xtick={0,500,1000,1500,2000},
ytick={0,0.25,0.5,0.75,1},
scaled x ticks=false,
xlabel={Number of Iterations},
xlabel style={yshift=0.8mm},
ylabel={},
grid=major,
major grid style={gray!25},
tick label style={font=\scriptsize},
label style={font=\footnotesize},
legend style={
at={(0.97,0.97)},
anchor=north east,
font=\fontsize{6.5}{6.5}\selectfont,
draw=black,
fill=white,
inner sep=0.8pt,
legend image post style={xscale=0.45}
},
xticklabel style={/pgf/number format/1000 sep={}},
axis line style={black}, tick style={black}]
\addplot[color=cyan!60!blue, line width=1.0pt]
table[x=iter, y=K_error]{fig1_data.dat};

\legend{$\|K_{\zeta,i}\!-\!K_{\zeta}^{*}\|_F/\|K_{\zeta}^{*}\|_F$}
\end{axis}
\end{tikzpicture}

\vspace{-2.6mm}
\makebox[1.08\linewidth][c]{\footnotesize (b)}
\phantomcaption
%\caption{}
\label{fig:K_error}
\end{subfigure}
\vspace{-4mm}
\caption{
Convergence of Algorithm~2.
(a) Trajectory of $\|P_{\zeta,i}-P_{\zeta}^{*}\|_F/\|P_{\zeta}^{*}\|_F$.
(b) Trajectory of $\|K_{\zeta,i}-K_{\zeta}^{*}\|_F/\|K_{\zeta}^{*}\|_F$.}
\label{Fig2}
\end{figure}

\begin{figure}[!t]
\centering

\begin{subfigure}{0.498\linewidth}
\centering
\begin{tikzpicture}
\begin{axis}[
width=1.16\linewidth,
height=4.65cm,
xmin=0,
xmax=28,
ymin=-4,
ymax=2,
xtick={0,7,14,21,28},
ytick={-4,-2,0,2},
scaled x ticks=false,
xlabel={Time (sec)},
xlabel style={yshift=0.8mm},
ylabel={},
grid=major,
major grid style={gray!25},
tick label style={font=\scriptsize},
label style={font=\footnotesize},
legend style={
at={(0.97,0.97)},
anchor=north east,
font=\fontsize{6.5}{6.5}\selectfont,
draw=black,
fill=white,
inner sep=0.8pt,
legend image post style={xscale=0.45}
},
xticklabel style={/pgf/number format/1000 sep={}},
axis line style={black},
tick style={black}
]
\addplot[color=cyan!60!blue,line width=1.0pt,mark=none]
table[x=x,y=y1]{fig2_left.dat};
\addplot[color=orange,line width=1.0pt,mark=none]
table[x=x,y=y2]{fig2_left.dat};
\addplot[color=orange!40!yellow,line width=1.0pt,mark=none]
table[x=x,y=y3]{fig2_left.dat};

\legend{$\alpha~{\rm (rad)}$, $q~{\rm (rad/s)}$, $\delta_e~{\rm (rad)}$}
\end{axis}
\end{tikzpicture}

\vspace{-2.6mm}
\makebox[1.12\linewidth][c]{\footnotesize (a) Trajectory of $x(t)$}
\phantomcaption
%\caption{}
\label{fig:x_decay}
\end{subfigure}
\hspace{-3mm}%
\begin{subfigure}{0.498\linewidth}
\centering
\begin{tikzpicture}
\begin{axis}[
width=1.16\linewidth,
height=4.65cm,
xmin=0,
xmax=28,
ymin=-10,
ymax=2,
xtick={0,7,14,21,28},
ytick={-10,-6,-2,2},
scaled x ticks=false,
xlabel={Time (sec)},
xlabel style={yshift=0.8mm},
ylabel={},
grid=major,
major grid style={gray!25},
tick label style={font=\scriptsize},
label style={font=\footnotesize},
legend style={
at={(0.97,0.97)},
anchor=north east,
font=\fontsize{6.5}{6.5}\selectfont,
draw=black,
fill=white,
inner sep=0.8pt,
legend image post style={xscale=0.45}
},
xticklabel style={/pgf/number format/1000 sep={}},
axis line style={black},
tick style={black}
]
\addplot[color=cyan!60!blue, line width=1.0pt]
table[x=x, y=y]{fig2_right.dat};

\legend{$\log(\|Z(t)\|_F)$}
\end{axis}
\end{tikzpicture}

\vspace{-2.6mm}
\makebox[1.08\linewidth][c]{\footnotesize (b) Trajectory of $\log(\|Z\|_F)$}
\phantomcaption
%\caption{}
\label{fig:Z_decay}
\end{subfigure}
\vspace{-4mm}
\caption{Convergence of the states $x$ and $Z$.}
\label{Fig3}
\end{figure}

\begin{figure}[!t]
\centering

\begin{subfigure}{0.498\linewidth}
\centering
\begin{tikzpicture}
\begin{axis}[
width=1.16\linewidth,
height=4.65cm,
xmin=0,
xmax=28,
ymin=-10,
ymax=70,
xtick={0,7,14,21,28},
ytick={-10,10,30,50,70},
scaled x ticks=false,
xlabel={Time (sec)},
xlabel style={yshift=0.8mm},
ylabel={},
grid=major,
major grid style={gray!25},
tick label style={font=\scriptsize},
label style={font=\footnotesize},
legend style={
at={(0.97,0.97)},
anchor=north east,
font=\fontsize{6.5}{6.5}\selectfont,
draw=black,
fill=white,
inner sep=1.2pt,
row sep=0pt,
legend image post style={xscale=0.55}
},
xticklabel style={/pgf/number format/1000 sep={}},
axis line style={black},
tick style={black}
]
\addplot[color=cyan!60!blue,line width=1.0pt,mark=none]
table[x=time,y=u_z]{fig3_left.dat};
\addplot[color=orange,line width=1.0pt,mark=none]
table[x=time,y=u_x]{fig3_left.dat};

\legend{
$u\!=\!-\!K_z^{*}z$,
$u\!=\!-\!K^{*}x$}
\end{axis}
\end{tikzpicture}

\vspace{-2.6mm}
\makebox[1.12\linewidth][c]{\footnotesize (a) Input signal $u(t)~({\rm deg})$}
\phantomcaption
%\caption{}
\label{fig:u_compare}
\end{subfigure}
\hspace{-3mm}%
\begin{subfigure}{0.498\linewidth}
\centering
\begin{tikzpicture}
\begin{axis}[
width=1.16\linewidth,
height=4.65cm,
xmin=0,
xmax=28,
ymin=-10,
ymax=70,
xtick={0,7,14,21,28},
ytick={-10,10,30,50,70},
scaled x ticks=false,
xlabel={Time (sec)},
xlabel style={yshift=0.8mm},
ylabel={},
grid=major,
major grid style={gray!25},
tick label style={font=\scriptsize},
label style={font=\footnotesize},
legend style={
at={(0.97,0.97)},
anchor=north east,
font=\fontsize{8}{10}\selectfont,
draw=black,
fill=white,
inner sep=1.2pt,
row sep=0pt,
legend image post style={xscale=0.55}
},
xticklabel style={/pgf/number format/1000 sep={}},
axis line style={black},
tick style={black}
]
\addplot[color=cyan!60!blue,line width=1.0pt,mark=none]
table[x=time,y=y_z]{fig3_right.dat};
\addplot[color=orange,line width=1.0pt,mark=none]
table[x=time,y=y_x]{fig3_right.dat};

% \legend{
% $y(t)$ under $u\!=\!-\!K_z^{*}z$,
% $y(t)$ under $u\!=\!-\!K^{*}x$
% }
\end{axis}
\end{tikzpicture}

\vspace{-2.6mm}
\makebox[1.05\linewidth][c]{\footnotesize (b) Output signal $y(t)~({\rm deg/s})$}
\phantomcaption
%\caption{}
\label{fig:y_compare}
\end{subfigure}
\vspace{-4mm}
\caption{The input and output trajectories of \eqref{dyn:LTI}.}
\label{Fig4}
\end{figure}

%%%%
%%%%-----------------
\section{Conclusion}

This paper presented a data-driven optimal output-feedback control approach for continuous-time LTI systems with unknown matrices. Using a canonical non-minimal realization constructed through Kreisselmeier's adaptive filter, the  linear quadratic control problem was reformulated in an augmented coordinate driven by  input-output data. We established an explicit relation between the optimal gain of the augmented problem and the optimal state-feedback gain of the original plant, which enabled the design of a data-driven VI algorithm.  
The effectiveness of the proposed method was demonstrated via simulations on a linearized F-16 aircraft model.

\bibliographystyle{IEEEtran}
\bibliography{references.bib}

\end{document}